\newtheorem{cor}{Corollary}
\newtheorem{lem}{Lemma}
\newtheorem{prop}{Proposition}
\newtheorem{thm}{Theorem}
\theoremstyle{definition}
\newtheorem{defn}{Definition}
\newtheorem{eg}{Example}
\newtheorem{qn}{Question}
\newcommand{\ang}[1]{\langle #1\rangle}
\newcommand{\caleft}{\curvearrowleft}
\newcommand{\car}{\curvearrowright}
\newcommand{\fin}{\mathrm{fin}}
\newcommand{\mb}{\mathbb}
\newcommand{\N}{\mb N}
\newcommand{\ol}{\overline}
\newcommand{\Q}{\mb Q}
\newcommand{\R}{\mb R}
\newcommand{\tl}{\triangleleft}
\newcommand{\Z}{\mb Z}
\DeclareMathOperator{\Aut}{Aut}
\DeclareMathOperator{\Inn}{Inn}
\DeclareMathOperator{\Out}{Out}
\DeclareMathOperator{\SO}{SO}
\title{Quotients by countable subgroups are hyperfinite}
\date{\today}
\author{Joshua Frisch}
\address{Division of Physics, Mathematics and Astronomy,
California Institute of Technology,
1200 E. California Blvd.,
Pasadena,
CA 91125}
\email{jfrisch@caltech.edu}
\author{Forte Shinko}
\address{Division of Physics, Mathematics and Astronomy,
California Institute of Technology,
1200 E. California Blvd.,
Pasadena,
CA 91125}
\email{fshinko@caltech.edu}
\thanks{The authors were partially supported by NSF Grant DMS-1464475.}
\begin{document}
\maketitle

\begin{abstract}
  We show that for any Polish group $G$ and any countable normal subgroup $\Gamma\tl G$,
  the coset equivalence relation $G/\Gamma$ is a hyperfinite Borel equivalence relation.
  In particular,
  the outer automorphism group of any countable group is hyperfinite.
\end{abstract}

\section{Introduction}
The purpose of this paper is to study the complexity of quotient groups $G/\Gamma$ from the point of view of descriptive set theory.
In particular,
we focus on the case where $G$ is Polish and $\Gamma$ is a countable normal subgroup.
If $\Gamma$ is a countable group,
then the automorphism group of $\Gamma$ has a natural Polish group structure,
and thus the outer automorphism group of $\Gamma$ is an example,
as is any countable subgroup of an abelian group. 

A major recent program is the study of complexity of ``definable'' equivalence relations.
Results in this area are often interpreted to be statements about the difficulty of classification of various natural mathematical objects. 
A particular focus of the theory of definable equivalence relations,
and one where much progress has recently been made,
is the study of Borel equivalence relations for which every class is countable,
the so-called countable Borel equivalence relations.
There is a natural preorder on Borel equivalence relations,
called Borel reduction,
where $E$ reducing to $F$ is interpreted as $E$ being ``easier'' than $F$.
The theory of countable Borel equivalence relations has been applied in numerous areas of mathematics.
For example,
the classification of finitely generated groups \cite{TV99},
of subshifts \cite{Cle09},
and the arithmetic equivalence of subsets of $\N$ \cite{MSS16} are all equally difficult.
In fact,
they are equivalent to the universal countable Borel equivalence relation $E_\infty$,
which is the hardest countable Borel equivalence relation.
On the other hand,
many other classification problems are easier.
For example,
classification of torsion-free finite rank abelian groups is substantially below $E_\infty$ \cite{Tho03, Tho09}.

Countable Borel equivalence relations can be characterized as those equivalence relations arising
from continuous actions of countable groups on Polish spaces,
and thus have very strong interplay with dynamics and group theory.
By a foundational result of Slaman-Steel and Weiss \cite{SS88, Wei84},
the equivalence relations which arise from a continuous (or more generally, Borel) action of $\Z$
are exactly the \textbf{hyperfinite} equivalence relations,
which are those which can be written as an increasing union of finite Borel equivalence relations.
More generally,
it has been shown that every Borel action of a countable abelian group \cite{GJ15},
and even of a countable locally nilpotent group \cite{SS13},
is hyperfinite. 
It is an open question whether this holds for all countable amenable groups.
By a theorem of Harrington-Kechris-Louveau \cite{HKL90},
the hyperfinite equivalence relations only occupy the first two levels of the hierarchy of
countable Borel equivalence relations on uncountable Polish spaces under Borel reduction,
and thus are considered to have low Borel complexity.

In general,
if $G$ is a Polish group and $\Gamma\le G$ is a countable subgroup,
then $G/\Gamma$ can be rather complicated;
we will give a non-hyperfinite example in \Cref{Preliminaries}.
However,
perhaps surprisingly,
if $\Gamma$ is a \textit{normal} subgroup of $G$,
then the coset equivalence relation $G/\Gamma$ must have low Borel complexity:
\begin{thm}
  Let $G$ be a Polish group and let $\Gamma$ be a countable normal subgroup of $G$.
  Then $G/\Gamma$ is hyperfinite.
\end{thm}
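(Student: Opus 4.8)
The strategy is to extract enough structure from normality to fall back on the theorems that countable abelian (Gao--Jackson) and countable locally nilpotent (Schneider--Seward) group actions are hyperfinite. The starting point is that conjugation gives a homomorphism $G\to\Aut(\Gamma)$; since $\Gamma$ is countable this is a homomorphism into the Polish group $\mathrm{Sym}(\Gamma)$, and being Borel it is automatically continuous. Hence each centralizer $C_G(\gamma)$ is an open (indeed clopen) subgroup, the centralizer $N:=C_G(\Gamma)=\bigcap_{\gamma\in\Gamma}C_G(\gamma)$ is a closed normal subgroup, and $G/N$ maps continuously and injectively into $\Aut(\Gamma)\le\mathrm{Sym}(\Gamma)$.

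I would then make three reductions. First, replacing $G$ by the closed subgroup $\overline\Gamma$ we may assume $\Gamma$ is dense: $G/\overline\Gamma$ is smooth (coset relations of closed subgroups are smooth), $G/\Gamma$ restricted to each $\overline\Gamma$-coset is Borel isomorphic to $\overline\Gamma/\Gamma$, and an equivalence relation is hyperfinite as soon as its restrictions to the classes of a smooth coarsening are; after this $N=C_G(\Gamma)=Z(G)$ and $Z(\Gamma)=\Gamma\cap Z(G)$. Second, I peel off the centre: since $\Gamma Z(G)$ is a central product with $\Gamma\cap Z(G)=Z(\Gamma)$, the relation $G/\Gamma$ restricted to any $\Gamma Z(G)$-coset reduces to the coset relation of the countable abelian group $Z(\Gamma)$ in $Z(G)$ and so is hyperfinite by Gao--Jackson; and $G/\Gamma$ sits below $G/\Gamma Z(G)$, which is the pullback along $G\to G/Z(G)$ of the coset relation of $\Gamma Z(G)/Z(G)\cong\Gamma/Z(\Gamma)$. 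Intersecting with $G/\Gamma$ a finite-equivalence-relation exhaustion of the latter, pulled back, exhibits $G/\Gamma$ as an increasing union of equivalence relations each lying below a \emph{smooth} relation with hyperfinite classes, hence hyperfinite --- so it suffices to handle $\Gamma/Z(\Gamma)\tl G/Z(G)$. Iterating walks $\Gamma$ up its upper central series through the $\Gamma/Z_k(\Gamma)$; in the limit the accumulated central subgroup is the hypercentre $\Gamma_\infty$, which is locally nilpotent, so Schneider--Seward closes the recursion and we may assume $\Gamma$ is centreless. Then $Z(G)=1$, so $G$ embeds continuously --- hence, by Lusin--Souslin, Borel-isomorphically onto a Borel subgroup --- into $\Aut(\Gamma)$, and since $G/\Gamma$ then identifies with the restriction of $\overline{\Inn(\Gamma)}/\Inn(\Gamma)$ to a Borel set, the third reduction brings us to the case that $G$ is a \emph{closed} subgroup of $\mathrm{Sym}(X)$ for some countable set $X$.

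This non-archimedean case is where I expect the essential difficulty --- it is the main obstacle. A first move is that the set of $\Gamma$-orbits on $X$ is a countable set on which $G$ acts (as $\Gamma\tl G$); the kernel $K\tl G$ of this action is closed, so $G/K$ is smooth and it suffices to treat $K/\Gamma$, where $K$ preserves each $\Gamma$-orbit setwise. From there one must, by a Borel combinatorial argument along the $\Gamma$-orbits of $X$ --- marker sequences, or directly a Borel reduction to $E_0$ --- build an exhaustion of $K/\Gamma$ by finite Borel equivalence relations. I would regard the transfinite bookkeeping in the second reduction, and this concluding combinatorial construction, as the two points where genuine care is needed; the rest of the argument is soft.
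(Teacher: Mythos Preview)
Your proposal is not a complete proof. You yourself identify the non-archimedean case (closed $G\le\mathrm{Sym}(X)$, $\Gamma$ dense, centreless) as ``the main obstacle'' and then do not resolve it: the closing paragraph gestures at ``marker sequences, or directly a Borel reduction to $E_0$'' but gives no construction, and the further reduction to the kernel $K$ preserving each $\Gamma$-orbit on $X$ does not obviously simplify matters (inside $K$ one still faces a countable normal subgroup with no visible abelian or nilpotent structure). So the argument, as written, stops exactly at the point where the actual work begins.

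There is also a gap in your second reduction. You write $G/\Gamma$ as an increasing union of relations $E_n = (G/\Gamma)\cap\tilde F_n$, argue each $E_n$ ``lies below a smooth relation with hyperfinite classes, hence hyperfinite'', and conclude $G/\Gamma$ is hyperfinite. But an increasing union of hyperfinite equivalence relations is not known to be hyperfinite in general; this is a well-known open problem. (The single step \emph{can} be salvaged by lifting a $\Z$-action generating $E'$ on $G/Z(G)$ to a Borel automorphism of $G$ and combining it with the right $Z(\Gamma)$-action to get a countable abelian group generating $G/\Gamma$---but then one has already proved the theorem outright, and the iteration up the central series is beside the point.) Separately, the transfinite passage to the hypercentre is not justified at limit ordinals: even granting each successor step, you have not explained what happens at stage $\omega$.

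The paper avoids all of these reductions. It fixes an exhaustion $\Gamma=\bigcup_k\Gamma_k$ by finitely generated subgroups and forms the abelian subgroup $A=\langle Z(\Gamma_k):k<\omega\rangle$ generated by \emph{all} their centres at once. The key lemma is that each $\Gamma_n$ acts with discrete orbits on $G/\bar A$: given $g$, pick $m$ with $g^{-1}\Gamma_n g\subset\Gamma_m$; then the open set $gC_G(\Gamma_m)\bar A$ isolates $g\bar A$ in its $\Gamma_n$-orbit, because any $g^{-1}\gamma g\in C_G(\Gamma_m)A$ can be rewritten (splitting $A$ as $\langle Z_k\rangle_{k>m}\cdot\langle Z_l\rangle_{l\le m}$) to land in $A$ itself. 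Hence $\Gamma\curvearrowright G/\bar A$ is hypersmooth, so hyperfinite; lifting a generating $\Z$-action and combining with right multiplication by $\bar A\cap\Gamma$ exhibits $G/\Gamma$ as the orbit relation of the countable abelian group $\Z\times(\bar A\cap\Gamma)$, and Gao--Jackson finishes. The point you are missing is that instead of peeling off the centre of $\Gamma$ and iterating, one should collect the centres of an exhausting family of finitely generated subgroups into a single abelian group and quotient by its closure in one stroke.
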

Notably,
in contrast to the aforementioned results,
we require no hypotheses on the algebraic structure of the acting group.
The proof proceeds by showing that the equivalence relation is generated by
a Borel action of a countable abelian group,
which is sufficient by the aforementioned theorem of Gao and Jackson.  

We obtain as a consequence the following result about outer automorphism groups:
\begin{cor}
  Let $\Gamma$ be a countable group.
  Then $\Out(\Gamma)$ is hyperfinite.
\end{cor}

If $G$ is a compact group with a countable normal subgroup $\Gamma\tl G$,
then we also show that the algebraic structure of $\Gamma$ is severely restricted:
\begin{thm}
  Let $G$ be a compact group and let $\Gamma$ be a countable normal subgroup of $G$.
  Then $\Gamma$ is locally virtually abelian,
  i.e.,
  every finitely generated subgroup of $\Gamma$ is virtually abelian.
\end{thm}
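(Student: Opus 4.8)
The plan is to exploit the conjugation action of $G$ on $\Gamma$: normality forces every $G$-conjugacy class lying inside $\Gamma$ to be finite, so each element of $\Gamma$ has a finite-index centralizer in $G$, and local virtual abelianness then follows by elementary group theory.

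\textbf{Step 1: conjugacy classes meeting $\Gamma$ are finite.} Fix $\gamma \in \Gamma$ and let $C = \{g\gamma g^{-1} : g \in G\}$ be its conjugacy class in $G$. Since $\Gamma \tl G$ we have $C \subseteq \Gamma$, so $C$ is countable; as a continuous image of $G$ it is compact, and as a subspace of the Hausdorff group $G$ it is Hausdorff; moreover $G$ acts on $C$ by conjugation, transitively and by homeomorphisms. Now I invoke the topological fact that a nonempty countable compact Hausdorff space admitting a transitive action of a group by homeomorphisms must be finite: by the Baire category theorem such a space has an isolated point, by homogeneity every point is then isolated, so the space is discrete, hence (being compact) finite. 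Therefore $C$ is finite, and by the orbit-stabilizer relation the centralizer $C_G(\gamma)$ --- which is exactly the stabilizer of $\gamma$ under conjugation --- has finite index in $G$. (It is also closed, being the preimage of the identity under $g \mapsto g\gamma g^{-1}\gamma^{-1}$, hence open; but only the finite-index conclusion is needed below.)

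\textbf{Step 2: passing to finitely generated subgroups.} Let $H = \langle \gamma_1, \dots, \gamma_n \rangle \le \Gamma$ be finitely generated. Then $C_G(H) = \bigcap_{i=1}^n C_G(\gamma_i)$ is a finite intersection of finite-index subgroups of $G$, hence has finite index in $G$. One checks that $H \cap C_G(H)$ is precisely the center $Z(H)$, and the natural injection $H/(H \cap C_G(H)) \hookrightarrow G/C_G(H)$ of coset spaces yields $[H : Z(H)] \le [G : C_G(H)] < \infty$. Thus $H$ is center-by-finite; in particular $Z(H)$ is an abelian subgroup of finite index in $H$, so $H$ is virtually abelian. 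Since $H$ was an arbitrary finitely generated subgroup, $\Gamma$ is locally virtually abelian.

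The argument is short, and I do not expect a genuine obstacle. The single slightly non-routine ingredient is the lemma that a countable compact homogeneous Hausdorff space is finite, which is an immediate consequence of Baire category together with homogeneity; everything else is the orbit-stabilizer relation and the standard index inequality $[H : H \cap K] \le [G : K]$.
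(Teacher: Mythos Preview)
Your proof is correct and follows essentially the same route as the paper: show that each $C_G(\gamma_i)$ has finite index, intersect finitely many to get $[G:C_G(H)]<\infty$, and conclude via $Z(H)=H\cap C_G(H)$. The only cosmetic difference is in how finite index is obtained---the paper applies Baire category to $G$ itself (a closed subgroup of countable index in a Baire group is nonmeager, hence open, hence of finite index by compactness), whereas you apply Baire category to the conjugacy class (a countable compact homogeneous Hausdorff space is finite); these are two sides of the same coin.
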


\subsection*{Acknowledgments}
We would like to thank Aristotelis Panagiotopoulos for posing the original question about outer automorphisms,
as well as Alexander Kechris, Andrew Marks and Todor Tsankov for many helpful comments.

\section{Preliminaries and examples}\label{Preliminaries}
\subsection{Descriptive set theory}
A \textbf{Polish space} is a second countable, completely metrizable topological space.
A \textbf{Borel equivalence relation} on a Polish space $X$ is an equivalence relation $E$
which is Borel as a subset of $X\times X$.
A Borel equivalence relation is \textbf{countable} (resp. \textbf{finite})
if every class is countable (resp. finite).
A countable Borel equivalence relation $E$ on $X$ is \textbf{smooth}
if there is a Borel function $f:X\to\R$ such that $xEx'$ if and only if $f(x) = f(x')$.
A Borel equivalence relation $E$ is \textbf{hyperfinite} (resp., \textbf{hypersmooth})
if $E = \bigcup_n E_n$,
where each $E_n\subset E_{n+1}$ (as a subset of $X\times X$)
and each $E_n$ is a finite (resp., smooth) Borel equivalence relation.
Given a Borel action of a countable group $\Gamma$ on a Polish space $X$,
we denote by $E^X_\Gamma$ the \textbf{orbit equivalence relation} of $\Gamma\car X$,
the Borel equivalence relation whose classes are the orbits of the action.
We will say that $\Gamma\car X$ is hyperfinite (resp., smooth, hypersmooth)
if its orbit equivalence relation $E^X_\Gamma$ is hyperfinite (resp., smooth, hypersmooth).

A \textbf{Polish group} is a topological group whose topology is Polish.
If $G$ is a Polish group and $H\le G$ is a closed subgroup,
then the quotient topology on the coset space $G/H$ is Polish
(see \cite[1.2.3]{BK96}).

\subsection{Countable subgroups of Polish groups}
Let $G$ be a Polish group and let $\Gamma\le G$ be a countable subgroup.
When clear from context,
we will abuse notation and identify
$G/\Gamma$ with the coset equivalence relation induced by $\Gamma\car G$
(technically,
$G/\Gamma$ is induced by the right action $G\caleft\Gamma$,
but this is isomorphic to the left action $\Gamma\car G$ via inversion).
For example,
we will say that $G/\Gamma$ is hyperfinite if $\Gamma\car G$ is hyperfinite.

Note that since the action $\Gamma\car G$ is free,
$G/\Gamma$ cannot be universal among countable Borel equivalence relations (see \cite[3.10]{Tho09}).

\begin{eg}
  We give below some examples of $G/\Gamma$ and the associated Borel complexity,
  for various $G$ and $\Gamma$:
  \begin{enumerate}
    \item $\R/\Z$ is smooth,
      since $\Z\le\R$ is a discrete subgroup
      (see \cite[7.2.1(iv)]{Kan08}).
    \item $\R/\Q$ is not smooth,
      since $\Q\le\R$ is a dense subgroup
      (see \cite[6.1.10]{Gao09}).
      Similarly,
      the commensurability relation $\R^+/\Q^+$ is not smooth.
      Note that both are hyperfinite,
      since they arise from Borel actions of countable abelian groups
      (see \cite[8.2]{GJ15}).
    \item Let $F_2\le\SO_3(\R)$ be a free subgroup on two generators.
      Then $\SO_3(\R)/F_2$ is not hyperfinite,
      since the free action $F_2\car\SO_3(\R)$ preserves the Haar measure
      (see \cite[7.4.8]{Gao09}).
    \item If $\Gamma$ is a countable group,
      then $\Inn(\Gamma)$ is a countable subgroup of $\Aut(\Gamma)$,
      which is a Polish group under the pointwise convergence topology,
      and we can consider the quotient $\Out(\Gamma) = \Aut(\Gamma)/\Inn(\Gamma)$.
      For example,
      when $\Gamma = S_\fin$ (the group of finitely supported permutations on $\N$),
      we have $\Out(S_\fin) \cong S_\infty/S_\fin$,
      which is hyperfinite and non-smooth.
  \end{enumerate}
\end{eg}
In the first, second and fourth examples,
$\Gamma$ is a normal subgroup of $G$.

\section{Proofs}
For any group $G$ and any subset $S\subset G$,
let $C_G(S)$ denote the centralizer of $S$ in $G$:
\[
  C_G(S) := \{g\in G:\forall s\in S(gs = sg)\}.
\]
Note that if $G$ is a topological group,
then $C_G(S)$ a closed subgroup of $G$.

\begin{prop}\label{OpenCentralizer}
  Let $G$ be a Baire group
  (i.e.,
  a topological group for which the Baire category theorem holds),
  and let $\Gamma$ be a finitely generated subgroup of $G$
  each of whose elements has countable conjugacy class in $G$.
  Then $C_G(\Gamma)$ is open in $G$.
\end{prop}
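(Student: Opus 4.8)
The plan is to reduce to a single group element and then invoke a Pettis-type dichotomy for subgroups. First, since $\Gamma$ is finitely generated, I would fix generators $\gamma_1,\dots,\gamma_n$ and observe that $C_G(\Gamma)=\bigcap_{i=1}^n C_G(\gamma_i)$: anything commuting with all the generators commutes with every word in them, hence with every element of $\Gamma$. As a finite intersection of open sets is open, it then suffices to prove that $C_G(\gamma)$ is open for each individual $\gamma\in\Gamma$.

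Next, for a fixed $\gamma$, I would identify the conjugacy class of $\gamma$ in $G$ with the left coset space $G/C_G(\gamma)$ via the bijection $gC_G(\gamma)\mapsto g\gamma g^{-1}$ (two cosets give the same conjugate precisely when they differ by an element of $C_G(\gamma)$). Since this conjugacy class is countable by hypothesis, $G$ is covered by countably many left translates of $C_G(\gamma)$. Each of these is a translate of the \emph{closed} subgroup $C_G(\gamma)$, hence is closed and in particular has the Baire property; as $G$ is a Baire space it is non-meager in itself, so at least one of these translates — and therefore $C_G(\gamma)$ itself — must be non-meager.

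Finally, I would close the argument with the standard fact that a non-meager subgroup $H$ with the Baire property in a Baire group is open, applied to $H=C_G(\gamma)$. The proof of this fact is a short Pettis-style computation: writing $H=U\mathbin{\triangle}M$ with $U$ open nonempty and $M$ meager, and fixing a nonempty open $V\subseteq U$ together with a point $v_0\in V$, one checks that for every $g$ in the identity neighbourhood $Vv_0^{-1}$ the open set $gV\cap V$ is nonempty, hence non-meager, hence not contained in the meager set $gM\cup M$; this produces an element of $gH\cap H$, so that $g\in HH^{-1}=H$. Thus $H$ contains a neighbourhood of the identity and, being a subgroup, is open.

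There is no serious obstacle here; the one point to watch is that $G$ is assumed merely to be a Baire topological group, not Polish or even metrizable, so the last step must be justified using nothing beyond the Baire category theorem and the fact that closed sets have the Baire property — both of which hold in this generality. If a citation is preferred, the dichotomy ``a subgroup with the Baire property is either meager or open'' can be quoted directly, shortening the proof to the first two paragraphs.
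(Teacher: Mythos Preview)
Your proof is correct and follows essentially the same route as the paper: reduce to single generators via $C_G(\Gamma)=\bigcap_i C_G(\gamma_i)$, use that the countable conjugacy class of $\gamma_i$ means $[G:C_G(\gamma_i)]\le\aleph_0$, apply Baire to get $C_G(\gamma_i)$ nonmeager, and conclude it is open. The only difference is that the paper states the last step (nonmeager closed subgroup $\Rightarrow$ open) without justification, whereas you spell out the Pettis argument; your added care about working in a merely Baire (not Polish) group is appropriate and the argument you give is valid in that generality.
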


\begin{proof}
  Let $\Gamma = \ang{\gamma_0,\ldots,\gamma_n}$.
  Since each $\gamma_i$ has countable conjugacy class in $G$,
  we have $[G:C_G(\gamma_i)]\le\aleph_0$,
  so by the Baire category theorem,
  $C_G(\gamma_i)$ is nonmeager.
  Thus $C_G(\gamma_i)$ is an open subgroup of $G$,
  and thus $C_G(\Gamma)$ is also open,
  since $C_G(\Gamma) = \bigcap_{i<n} C_G(\gamma_i)$.
\end{proof}

Note that when $Z(\Gamma)$ is finite,
this immediately implies that $\Gamma$ is a discrete subgroup of $G$.

When $G$ is a compact group,
\Cref{OpenCentralizer} implies the following algebraic restriction on $\Gamma$:
\begin{thm}
  Let $G$ be a compact group and let $\Gamma$ be a countable normal subgroup of $G$.
  Then $\Gamma$ is locally virtually abelian,
  i.e. every finitely generated subgroup of $\Gamma$ is virtually abelian.
\end{thm}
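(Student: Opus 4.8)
The plan is to reduce the statement to \Cref{OpenCentralizer} together with the compactness of $G$. First I would check that the hypotheses of \Cref{OpenCentralizer} are available. Since $G$ is compact it is a Baire group, so it suffices to verify that every finitely generated subgroup of $\Gamma$ has all of its elements with countable conjugacy class in $G$. But this is exactly what normality buys us: for $\gamma\in\Gamma$ and $g\in G$ we have $g\gamma g^{-1}\in\Gamma$, so the conjugacy class of $\gamma$ in $G$ is contained in the countable set $\Gamma$, hence is countable. Thus \Cref{OpenCentralizer} applies to any finitely generated $H\le\Gamma$.

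Next, fix a finitely generated subgroup $H\le\Gamma$. By the above, \Cref{OpenCentralizer} gives that $C_G(H)$ is an open subgroup of $G$. Since $G$ is compact and $C_G(H)$ is open, it has finite index: $[G:C_G(H)]<\infty$.

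The remaining step is purely algebraic. The subgroup $H\cap C_G(H)$ consists precisely of the elements of $H$ that commute with every element of $H$, so it equals the center $Z(H)$. Hence
\[
  [H:Z(H)] = [H:H\cap C_G(H)] \le [G:C_G(H)] < \infty.
\]
Since $Z(H)$ is abelian and of finite index in $H$, the group $H$ is virtually abelian; as $H$ was an arbitrary finitely generated subgroup of $\Gamma$, this shows $\Gamma$ is locally virtually abelian.

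I do not expect a serious obstacle here: the only point requiring any care is the transfer of the hypothesis of \Cref{OpenCentralizer} from $G$ to $\Gamma$ via normality, and after that everything is a one-line index inequality. It is worth noting that one does not need Schur's theorem to conclude virtual abelianness: the center $Z(H)$ itself serves as the finite-index abelian subgroup.
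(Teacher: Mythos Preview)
Your proof is correct and follows essentially the same route as the paper: apply \Cref{OpenCentralizer} to a finitely generated subgroup, use compactness to get finite index of the centralizer, and then observe $Z(H)=H\cap C_G(H)$ to conclude. You are merely more explicit than the paper in verifying the hypotheses of \Cref{OpenCentralizer} (compact $\Rightarrow$ Baire, and normality $\Rightarrow$ countable conjugacy classes) and in writing out the index inequality $[H:H\cap C_G(H)]\le[G:C_G(H)]$.
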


\begin{proof}
  Let $\Delta$ be a finitely generated subgroup of $\Gamma$.
  Then by \Cref{OpenCentralizer}, $C_G(\Delta)$ is an open subgroup of $G$,
  so since $G$ is compact,
  the index of $C_G(\Delta)$ in $G$ is finite.
  Thus since $Z(\Delta) = \Delta\cap C_G(\Delta)$,
  the index of $Z(\Delta)$ in $\Delta$ is finite.
\end{proof}

We now prove the main theorem.
\begin{thm}\label{ThmHF}
  Let $G$ be a Polish group and let $\Gamma$ be a countable normal subgroup of $G$.
  Then $G/\Gamma$ is hyperfinite.
\end{thm}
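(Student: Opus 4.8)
The plan is to reduce to the theorem of Gao--Jackson by exhibiting $G/\Gamma$ as the orbit equivalence relation of a Borel action of a countable abelian group, or at least as a countable increasing union of such (which suffices for hyperfiniteness). The starting observation is that for a normal subgroup, every conjugacy class in $G$ of an element $\gamma\in\Gamma$ stays inside $\Gamma$, hence is countable; so \Cref{OpenCentralizer} applies to every finitely generated subgroup $\Delta\le\Gamma$, giving that $C_G(\Delta)$ is open, hence of countable index in the Polish (in particular Baire, and second countable) group $G$. Write $\Gamma=\bigcup_n\Delta_n$ as an increasing union of finitely generated subgroups, and set $H_n:=C_G(\Delta_n)$, a decreasing sequence of open finite-or-countable-index subgroups, with $C_G(\Gamma)=\bigcap_n H_n$.

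The key structural point is that $\Delta_n$ acts on $G$ by left translation, and on the open subgroup $H_n$ this action commutes with everything in $H_n$, so modulo $H_n$ things are controlled by the \emph{finitely generated} group $\Delta_n$; the trouble is only the finitely many $\Delta_n$-orbits that leave $H_n$. Concretely, I would first handle the restriction of $\Gamma\car G$ to the clopen set $H_n$ (or to $\Delta_n$-invariant clopen sets built from it): on such a set the orbit equivalence relation of $\Gamma$ is generated by $\Delta_n$ together with a translation action of the \emph{countable abelian} group $\Gamma/(\Gamma\cap H_n)$-part -- more carefully, one decomposes the $\Gamma$-action using the normal series and the fact that conjugation by $\Delta_n$ is trivial on $H_n$, so that the piece of the action not captured by $\Delta_n$ itself factors through an abelian quotient. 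Since finitely generated abelian-by-(finite rank) pieces and finite index extensions of abelian actions are still hyperfinite (by Gao--Jackson together with closure of hyperfiniteness under finite index and increasing unions), each $E^{H_n}_\Gamma$ is hyperfinite. One then uses that $G$ is covered by countably many translates of $H_n$ (countable index), that a countable union of translates of a hyperfinite restriction is hyperfinite, and finally that $E^G_\Gamma=\bigcup_n$ of an increasing sequence of hyperfinite subequivalence relations obtained this way, invoking closure of hyperfiniteness under increasing unions.

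The main obstacle I anticipate is making precise the claim that, on the clopen piece $H_n$, the $\Gamma$-orbit equivalence relation is generated by an action of a countable \emph{abelian} group rather than just a finitely generated one. The point is that restricting the $\Gamma$-translation action to $H_n$ does not make $\Gamma$ abelian; what one actually wants is to observe that two points $g,g'\in H_n$ are $\Gamma$-equivalent iff they differ by an element of $\Gamma$, and that the subgroup $\Gamma\cap H_n$ is central in $\Gamma$ (since it centralizes $\Delta_n$ and, being in $H_n$, commutes with $\Delta_m$ for $m\le n$; one arranges the bookkeeping so that the relevant generators are all accounted for), so the action of $\Gamma$ on the $\Gamma\cap H_n$-classes factors through the finite-or-countable group $\Gamma/(\Gamma\cap H_n)$. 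Thus $E^{H_n}_\Gamma$ is an extension of the smooth (discrete) equivalence relation given by the free $\Gamma\cap H_n$-action by the orbit equivalence relation of $\Gamma/(\Gamma\cap H_n)$; careful choice of the exhaustion $\{\Delta_n\}$ forces $\Gamma/(\Gamma\cap H_n)$ to be abelian for a cofinal set of $n$, and then Gao--Jackson plus standard closure properties finish it. Verifying this centrality/factoring and checking the closure lemmas (finite index, countably many translates, increasing unions) are the steps that need real care; everything else is bookkeeping with \Cref{OpenCentralizer}.
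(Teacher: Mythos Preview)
There are two genuine gaps. First, your central structural claim---that $\Gamma\cap H_n$ is central in $\Gamma$, or that by choosing the exhaustion one can force $\Gamma/(\Gamma\cap H_n)$ to be abelian---does not hold. The subgroup $\Gamma\cap H_n=C_\Gamma(\Delta_n)$ centralizes $\Delta_n$, but its elements need not commute with one another nor with the rest of $\Gamma$; in fact $\Gamma\cap H_n$ is not even normal in $\Gamma$ in general, since conjugation by $\gamma\in\Gamma$ carries $C_\Gamma(\Delta_n)$ to $C_\Gamma(\gamma\Delta_n\gamma^{-1})$ and $\Delta_n$ need not be normal in $\Gamma$. So the passage to an abelian acting group on each clopen piece is not available by this route (and the side claim that the $\Gamma\cap H_n$-action is ``smooth (discrete)'' is likewise unjustified: $\Gamma\cap H_n$ need not be discrete in $G$). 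Second, your endgame invokes ``closure of hyperfiniteness under increasing unions'', but whether an increasing union of hyperfinite countable Borel equivalence relations is hyperfinite is a well-known open problem. What \emph{is} available, and what the paper uses, is that a \emph{hypersmooth} countable Borel equivalence relation---an increasing union of smooth ones---is hyperfinite (Dougherty--Jackson--Kechris).

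The paper supplies exactly the missing idea: rather than hoping centralizer quotients are abelian, it \emph{constructs} an abelian subgroup $A=\ang{Z(\Gamma_k)}_{k<\omega}$ generated by the centers of the $\Gamma_k$ (these pairwise commute, since $Z(\Gamma_l)$ centralizes $\Gamma_l\supset Z(\Gamma_k)$ for $k<l$), and then proves as the key lemma that each $\Gamma_n$-orbit on $G/\bar A$ is \emph{discrete}, hence smooth. This yields hypersmoothness of $\Gamma\car G/\bar A$, sidestepping the union problem entirely. A generating $\Z$-action on $G/\bar A$ is then lifted to $G$ and combined with right translation by $\bar A\cap\Gamma$ to exhibit $E^G_\Gamma$ as the orbit relation of the countable abelian group $\Z\times(\bar A\cap\Gamma)$, whence Gao--Jackson applies. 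Your outline has the right instincts (use \Cref{OpenCentralizer}, aim for Gao--Jackson), but the reduction to an abelian action requires the construction of $A$ and the discreteness lemma on $G/\bar A$, neither of which appears in the proposal.
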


\begin{proof}
  Let $\Gamma = (\gamma_k)_{k<\omega}$ and denote $\Gamma_k := \ang{\gamma_0,\ldots,\gamma_k}$.
  Let $C_k := C_G(\Gamma_k) = C_G(\gamma_0,\ldots,\gamma_k)$
  and let $Z_k := Z(\Gamma_k) = C_k\cap\Gamma_k$
  be the center of $\Gamma_k$.
  By \Cref{OpenCentralizer},
  $C_k$ is an open subgroup of $G$.
  
  Let $A := \ang{Z_k}_{k<\omega}$,
  the subgroup of $G$ generated by the $Z_k$ for all $k<\omega$.
  Then $A$ is an abelian subgroup of $G$,
  since each $Z_k$ is abelian,
  and since $Z_k$ commutes with $Z_l$ (pointwise) for any $k < l$.
  
  The principal fact we use about $A$ is the following:
  \begin{lem}
    $\Gamma\car G/\bar A$ is hyperfinite.
  \end{lem}
  
  \begin{proof}
    Since $\bar A$ is a closed subgroup of $G$,
    the coset space $G/\bar A$ is a standard Borel space,
    and thus $\Gamma\car G/\bar A$ induces a Borel equivalence relation.
    
    Every hypersmooth countable Borel equivalence relation is hyperfinite (see \cite[5.1]{DJK94}),
    so it suffices to show that $\Gamma\car G/\bar A$ is hypersmooth.
    Since $\Gamma$ is the increasing union of $(\Gamma_n)_n$,
    it suffices to show that $\Gamma_n\car G/\bar A$ is smooth.
    In fact,
    we will show that every orbit of $\Gamma_n\car G/\bar A$ is discrete,
    which implies smoothness
    (enumerate a basis,
    then for each orbit,
    find the first basic open set isolating an element of the orbit,
    and select that element).
    
    Fix $g\in G$,
    and fix $m$ large enough such that $g^{-1}\Gamma_n g\subset\Gamma_m$
    (for instance, by normality of $\Gamma$,
    take any $\Gamma_m$ containing $\{g^{-1}\gamma_i g\}_{i<n}$).
    We claim that the open neighbourhood $gC_m\bar A$ of $g\bar A$
    contains no other elements of the $\Gamma_n$-orbit of $g\bar A$.
    Suppose that $\gamma g\bar A\subset gC_m\bar A$ for some $\gamma\in\Gamma_n$,
    so that $g^{-1}\gamma g\in C_m\bar A$.
    Since $C_m$ is an open subgroup of $G$,
    it follows that $C_m A$ is closed
    (since its complement is a union of cosets of an open subgroup),
    so $C_m\bar A\subset \ol{C_m A} = C_m A$,
    and thus $g^{-1}\gamma g\in C_m A$.
    
    Write $g^{-1}\gamma g = ca$ for some $c\in C_m$ and $a\in A$.
    Now $A = \ang{Z_k}_{k<\omega}$,
    but since $A$ is abelian,
    we have $A = \ang{Z_k}_{k > m}\ang{Z_l}_{l\le m}$,
    and thus we can write $a = d\beta$,
    where $d\in\ang{Z_k}_{k > m}\subset C_m$
    and $\beta\in\ang{Z_l}_{l\le m}\subset\Gamma_m\cap A$.
    Now we have $g^{-1}\gamma g = cd\beta$,
    but since $g^{-1}\gamma g$ and $\beta$ are in $\Gamma_m$,
    we have $cd\in\Gamma_m$.
    Since $c$ and $d$ are in $C_m$ as well,
    we get $cd\in Z_m\subset A$.
    Now since $\beta$ is also in $A$,
    we have $g^{-1}\gamma g = cd\beta\in A\subset\bar A$,
    and thus $g\bar A = \gamma g\bar A$.
    \end{proof}
  
  We now use this lemma to show that $E^G_\Gamma$
  is induced by the action of a countable abelian group.
  This is sufficient since by a theorem of Gao and Jackson,
  every orbit equivalence relation of a countable abelian group is hyperfinite (\cite[8.2]{GJ15}).
  
  Since $\Gamma\car G/\bar A$ is hyperfinite,
  its orbit equivalence relation is generated
  by a Borel automorphism $T$ of $G/\bar A$
  (see \cite[5.1]{DJK94}).
  For each left $\bar A$-coset $C$,
  let $\gamma_{(C)}\in\Gamma$ be minimal such that $T(C) = \gamma_{(C)} C$,
  and let $U:G\to G$ be the Borel automorphism defined by $U(g) = \gamma_{(g\bar A)}g$
  (the inverse is defined by $g\mapsto(\gamma_{(T^{-1}(g\bar A))})^{-1}g$).
  This induces a Borel action $\Z\car G$,
  denoted $(n,g)\mapsto n\cdot g$,
  such that
  \begin{enumerate}
    \item $g\bar A$ and $h\bar A$ are in the same $\Gamma$-orbit
    iff for some $n$, $(n\cdot g)\bar A = h\bar A$,
    \item $E^G_\Z\subset E^G_\Gamma$,
    \item and $\Z\car G$ commutes with the right multiplication action $G\caleft(\bar A\cap\Gamma)$.
  \end{enumerate}
  So there is a Borel action of $\Z\times(\bar A\cap\Gamma)$ on $G$
  such that $E^G_{\Z\times(\bar A\cap\Gamma)}\subset E^G_\Gamma$.
  We claim that in fact,
  $E^G_{\Z\times(\bar A\cap\Gamma)} = E^G_\Gamma$.
  Suppose that $g,h\in G$ are in the same $\Gamma$-coset
  (note that we don't need to specify left/right since $\Gamma$ is normal).
  Then $g\bar A$ and $h\bar A$ are in the same $\Gamma$-orbit,
  so there is some $n\in\Z$ with $(n\cdot g)\bar A = h\bar A$.
  Since $E^G_\Z\subset E^G_\Gamma$,
  we have that $n\cdot g$ is in the same $\Gamma$-coset as $g$,
  and thus in the same $\Gamma$-coset as $h$.
  Since $n\cdot g$ and $h$ are in the same left $\bar A$-coset,
  and also in the same (left) $\Gamma$-coset,
  they are in the same left $\bar A\cap\Gamma$-coset.
  Thus $E^G_{\Z\times(\bar A\cap\Gamma)} = E^G_\Gamma$.
  Since $A$ is abelian,
  $\bar A$ is also abelian,
  and thus $\Z\times(\bar A\cap\Gamma)$ is a countable abelian group.
  So $E^G_\Gamma$ is generated by the action
  of a countable abelian group,
  and is therefore hyperfinite.
\end{proof}

We can extend this result to a slightly more general class of subgroups:
\begin{cor}
  Let $G$ be a Polish group and let $\Gamma\le G$ be a countable subgroup of $G$
  each of whose elements has countable conjugacy class in $G$.
  Then $G/\Gamma$ is hyperfinite.
\end{cor}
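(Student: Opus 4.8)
The plan is to reduce to the normal case already settled in \Cref{ThmHF}. The crucial observation is that the hypothesis on conjugacy classes forces the normal closure of $\Gamma$ in $G$ to remain countable. Concretely, I would set
\[
  H := \ang{g\gamma g^{-1} : g\in G,\ \gamma\in\Gamma},
\]
the subgroup of $G$ generated by all $G$-conjugates of elements of $\Gamma$. For each fixed $\gamma\in\Gamma$ the conjugacy class $\{g\gamma g^{-1}:g\in G\}$ is countable by hypothesis, so $\bigcup_{\gamma\in\Gamma}\{g\gamma g^{-1}:g\in G\}$ is a countable union of countable sets, hence countable; therefore $H$ is generated by a countable set, and so $H$ is a \emph{countable} subgroup of $G$. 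By construction $H$ is stable under conjugation, so $H\tl G$, and clearly $\Gamma\le H$.

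Next I would apply \Cref{ThmHF} to the countable normal subgroup $H$ to conclude that $E^G_H$ is hyperfinite. Since $\Gamma\le H$, every $\Gamma$-coset is contained in an $H$-coset, so $E^G_\Gamma\subset E^G_H$; moreover $E^G_\Gamma$ is a countable Borel equivalence relation, being induced by the continuous left-translation action $\Gamma\car G$. It then remains to invoke the fact that a Borel subequivalence relation of a hyperfinite Borel equivalence relation is itself hyperfinite. I would include the short argument for this: writing $E^G_H=\bigcup_n F_n$ as an increasing union of finite Borel equivalence relations, each $E^G_\Gamma\cap F_n$ is again a finite Borel equivalence relation, these increase with $n$, and $\bigcup_n(E^G_\Gamma\cap F_n)=E^G_\Gamma\cap E^G_H=E^G_\Gamma$ because $E^G_\Gamma\subset E^G_H$; hence $E^G_\Gamma$ is hyperfinite, i.e.\ $G/\Gamma$ is hyperfinite.

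I do not expect any genuine obstacle here: the only step that uses the new hypothesis is the countability of $H$, which is immediate, and everything afterward is a direct appeal to \Cref{ThmHF} together with an elementary closure property of hyperfiniteness. If one wished to avoid even that last appeal, one could instead use that the proof of \Cref{ThmHF} produces a countable abelian group acting Borel-ly on $G$ with orbit equivalence relation equal to $E^G_H$, intersect its finite approximations with $E^G_\Gamma$, and rerun the Gao--Jackson argument; but the subequivalence-relation route is cleaner and is the one I would take.
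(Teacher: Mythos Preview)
Your proposal is correct and matches the paper's proof essentially line for line: the paper also takes the normal closure $\Delta=\ang{g\Gamma g^{-1}}_{g\in G}$, observes it is countable by the conjugacy-class hypothesis, applies \Cref{ThmHF}, and then uses closure of hyperfiniteness under subequivalence relations. Your write-up is in fact more detailed than the paper's, which leaves the countability of the normal closure and the subequivalence-relation closure as one-line remarks.
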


\begin{proof}
  Since every element of $\Gamma$ has countable conjugacy class in $G$,
  the subgroup $\Delta := \ang{g\Gamma g^{-1}}_{g\in G}$ is a countable normal subgroup of $G$,
  and thus by \Cref{ThmHF},
  $E^G_\Delta$ is hyperfinite.
  Since $E^G_\Gamma \subset E^G_\Delta$,
  we have that $G/\Gamma$ is also hyperfinite
  (since hyperfiniteness is closed under subequivalence relations).
\end{proof}

\begin{cor}
  Let $\Gamma$ be a countable group.
  Then $\Out(\Gamma)$ is hyperfinite.
\end{cor}

\begin{proof}
  This follows from \Cref{ThmHF},
  since $\Inn(\Gamma)\tl\Aut(\Gamma)$.
\end{proof}

We end with some open questions:
\begin{qn}
  Let $G$ be a Polish group and let $\Gamma$ be a countable subgroup.
  What are the possible Borel complexities of $G/\Gamma$?
  In particular,
  are they cofinal among orbit equivalence relations arising from free actions?
\end{qn}

\begin{defn}
  For a Polish group $G$,
  define the subgroup $Z_\omega(G)$ as follows:
  \[
    Z_\omega(G)
    := \{g\in G:\text{$g$ has countable conjugacy class}\}.
  \]
\end{defn}

In general,
$Z_\omega(G)$ is a characteristic subgroup of $G$,
analogous to the FC-center,
and $Z_\omega(G)$ is $\mathbf\Pi^1_1$ by Mazurkiewicz-Sierpi\'nski (see \cite[29.19]{Kec95}).

\begin{qn}
  Is there a Polish group $G$ such that $Z_\omega(G)$ is $\mathbf\Pi^1_1$-complete?
\end{qn}

\bibliography{quotient_hyperfinite}
\bibliographystyle{alpha}

\end{document}